\documentclass[12pt]{amsart}

\usepackage{amsmath}

\def\lr{\mbox{\begin{picture}(7,10)
\put(0,8){\line(1,-1){10}}
\put(0,8){\line(1,0){10}}
\put(10,8){\line(0,-1){10}}
\end{picture}
}}

\usepackage{tikz}
\usepackage{caption}
\usepackage{subcaption}
\usepackage[colorlinks=true, pdfstartview=FitV, linkcolor=blue, citecolor=blue, plainpages=false, pdfpagelabels=true, urlcolor=blue]{hyperref}

\DeclareMathOperator{\sech}{sech}

\usepackage{subcaption}

\usepackage{graphicx}
\usepackage{color}
\usepackage{amsmath}
\usepackage{amssymb}
\newcommand{\beq}{\begin{eqnarray*}}
\newcommand{\feq}{\end{eqnarray*}}
\newcommand{\beqn}{\begin{eqnarray}}
\newcommand{\feqn}{\end{eqnarray}}

\newtheorem{theorem}{Theorem}[section]
\newtheorem{lemma}[theorem]{Lemma}

\newtheorem{proposition}[theorem]{Proposition}
\theoremstyle{definition}

\newtheorem{example}[theorem]{Example}

\theoremstyle{remark}

\numberwithin{equation}{section}

\allowdisplaybreaks

\begin{document}

\title[ Sharp Threshold for Wave Breaking in Whitham-Type Equations]{ A Sharp Threshold for Wave Breaking in Nonlocal Whitham-Type Equations}
\author{Yongki Lee}
\address{Department of Mathematical Sciences, Georgia Southern University, Statesboro,  30458}
\email{yongkilee@georgiasouthern.edu, WEB: sites.google.com/site/yongkimath/}

\keywords{Wave breaking, Whitham equation, Shallow water equations.}
\subjclass{Primary, 35L05; Secondary, 35B30}
\begin{abstract}


This paper establishes a sharp, expanded wave-breaking criterion for a class of nonlinear nonlocal Whitham-type equations, significantly generalizing the classical threshold introduced by Seliger \cite{Se68}. While the system of inequalities governing the spatial extrema of the fluid deviation has been extensively studied, establishing a sharp condition has remained a challenge. This difficulty is primarily due to the non-cooperative nature of the system, which precludes the application of standard comparison principles. We rigorously overcome this analytical obstacle by identifying the exact nonlinear threshold that confines the solutions of the inequality system and analyzing its time evolution.

\end{abstract}
\maketitle

\section{Introduction and statement of main result}
In this paper, we are concerned with the wave breaking phenomena - solutions that remain bounded while their derivatives become unbounded - for the nonlocal Whitham  type equation \cite{Wh74}:
\begin{equation}\label{whitham_0}
\left\{
  \begin{array}{ll}
   \partial_t u + u \partial_x u + \int_{\mathbb{R}} K (x-\xi)u_{\xi} (t, \xi) \, d\xi =0, & t>0, x \in \mathbb{R}, \\
    u(0,x)=u_0 (x), &   x\in \mathbb{R},\hbox{}
  \end{array}
\right.
\end{equation}
where 
$K(x)=\frac{1}{2\pi}\int_{\mathbb{R}} c(\kappa) e^{i\kappa x} \, d\kappa$
is the Fourier transform of the prescribed phase velocity $c(\kappa)$. The function $u(t,x)$ models the deviation of the fluid surface from the rest position. The equation was proposed by Whitham as an alternative to the Korteweg-de Vries (KdV) equation for describing surface wave motion in an ideal fluid.

Whitham emphasized that wave breaking is one of the most intriguing long-standing problems in water wave theory. Since the KdV equation cannot describe breaking, he proposed \eqref{whitham_0} with the singular kernel
\begin{equation}\label{singular}
K_0 (x) = \int_{\mathbb{R}} \bigg{(}\frac{\tanh \xi}{\xi}  \bigg{)}^{1/2} e^{i\xi x} \, d \xi,
\end{equation} as a model equation that combines full linear dispersion with long-wave nonlinearity, and conjectured wave breaking for \eqref{whitham_0}-\eqref{singular}.

The formal approach to proving wave breaking for Whitham-type equations originates from Seliger's 
ingenious argument \cite{Se68}, which tracks the dynamics of
\begin{equation}\label{infsup}
m_1 (t) : = \inf_{x \in \mathbb{R}} [u_x (t,x)] \ \text{and} \ m_2 (t) : = \sup_{x \in \mathbb{R}} [u_x (t,x)],
\end{equation}
attained at $x=\xi_1 (t)$ and $x=\xi_2 (t)$, respectively, provided that $K$ is bounded and integrable, among other assumptions.
The mapping $t \rightarrow \xi_i (t)$, however, may be multivalued, so these curves are not necessarily well-defined in general. Moreover, Seliger's formal analysis requires the additional assumption that the curves $\xi_1 (t)$ and $\xi_2 (t)$ are smooth. These strong assumptions were later removed in the rigorous work of Constantin and Escher \cite{CE98}.

\begin{figure}[ht]
\begin{center}
\includegraphics[width=120mm]{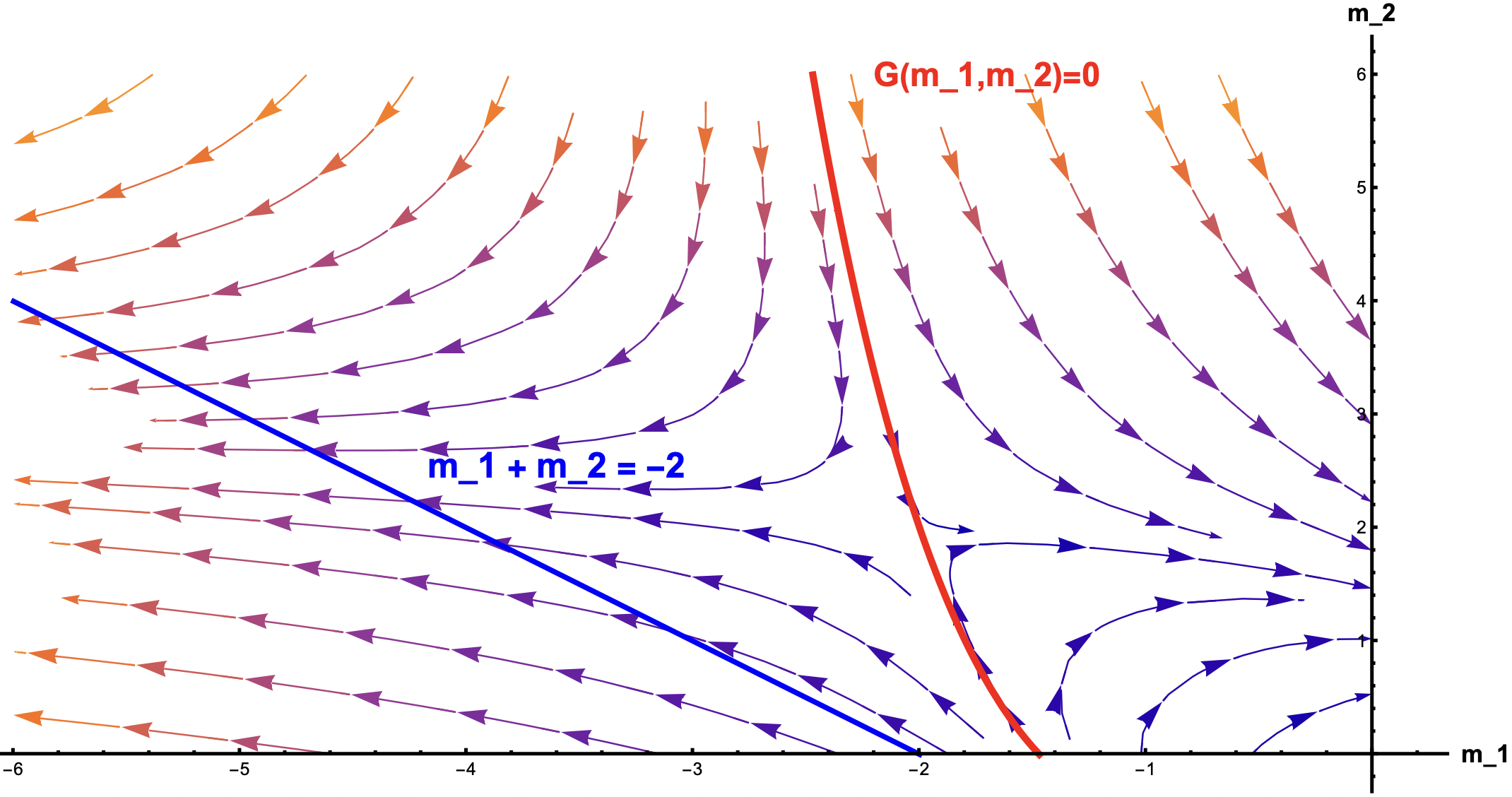}
\end{center}
\caption{The area to the left of the red curve represents $\Omega$ in Theorem \ref{main_thm}. The region below the blue line represents the wave-breaking condition \eqref{Secon} in Seliger's work \cite{Se68}. The small arrows display the phase portrait of the corresponding system of differential \emph{equations}.  }\label{fig1}
\end{figure}

Following the arguments in \cite{Se68,CE98}, we assume that $K(x)$ is smooth, integrable on $\mathbb{R}$, symmetric, and monotonically decreasing on $x\in [0,\infty)$. For non-integrable kernels, we refer to \cite{VH17} and references therein.

Differentiating the first equation in \eqref{whitham_0} with respect to $x$ and evaluating the resulting expressions at $x=\xi_1 (t)$ and $x=\xi_2 (t)$, one obtains the following coupled differential inequalities, as derived in \cite{Se68}:
\begin{equation}\label{m_eqn_k}
\left\{
  \begin{array}{ll}
  m'_1 (t)\leq -m^2 _1 (t) + K(0)(m_2 (t) -m_1 (t)) \ \ a.e, \\
   m'_2 (t) \leq -m^2 _2 (t) + K(0)(m_2 (t)-m_1 (t)) \ \ a.e.
  \end{array}
\right.
\end{equation}
Here, $K(0)>0$.

The wave breaking condition for the Whitham-type equation in \cite{Se68} is 
\begin{equation}\label{Se68con}
\inf_{x \in \mathbb{R}} [u' _0 (x)]+\sup_{x \in \mathbb{R}} [u' _0 (x)]\leq -2K(0),
\end{equation}
which states that a sufficiently asymmetric initial profile leads to wave breaking in finite time. 

The above argument, together with the rigorous proof in \cite{CE98}, has become a cornerstone in the study of wave breaking for nonlinear nonlocal shallow water equations. Moreover, condition \eqref{Se68con}, or similar methods based on bounding linear combinations of extrema, has been widely used in the literature, including recent works \cite{SH17, GH18,  YL20, MLQ16, LW18}. This approach is effective because linear combinations of spatial extrema preserve a useful structure: for instance, if $m_1 (0) +m_2 (0) \leq -2K(0)$, then this inequality remains valid for all time.

The main contribution of this study is a rigorous derivation of a significantly more general and sharp wave breaking criterion that strictly extends Seliger's classical condition \eqref{Se68con}. We emphasize that the system \eqref{m_eqn_k} is non-cooperative  \cite{EK39, HS95,WW70}; therefore, standard comparison arguments with the corresponding equality system are inapplicable.




To state the main result, for simplicity we normalize $K(0)= 1$, so that \eqref{m_eqn_k} reduces to
\begin{equation}\label{m_eqn}
\left\{
  \begin{array}{ll}
  m'_1 (t)\leq -m^2 _1 (t) + m_2 (t) -m_1 (t) \ \ a.e, \\
   m'_2 (t) \leq -m^2 _2 (t) +m_2 (t)-m_1 (t) \ \ a.e.
  \end{array}
\right.
\end{equation}

From \eqref{infsup}, we necessarily have $m_1 (t) \leq 0 \leq m_2 (t)$, as long as they exist. 

We now define
$$
 \Omega:=\{(x,y)\in \mathbb{R}^- \times \mathbb{R}^+  \, \big| \, G(x,y)<0 \},
$$
where
$$
G(x,y):=x+y+\text{sgn}(x-y+4)\sqrt{(x-y)\big\{(x-y)+4\ln(y-x)+4-4\ln 4 \big\}},
$$
and state the main theorem.

\begin{theorem}\label{main_thm}
Consider \eqref{whitham_0}. If $u_0 \in H^2(\mathbb{R})$ satisfies
$$(\inf_{x \in \mathbb{R}}[u' _0(x)] , \sup_{x \in \mathbb{R}}[u' _0 (x)]) \in \Omega,$$
then
$$
\lim_{t\to T-} \inf_{x\in\mathbb{R}}[u_x(t,x)] =-\infty,
$$
where 
$$T\leq -\frac{2}{G(\inf_{x \in \mathbb{R}}[u' _0(x)], \sup_{x \in \mathbb{R}}[u' _0(x)])}.$$
\end{theorem}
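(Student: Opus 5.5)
The plan is to find a scalar functional of $(m_1,m_2)$ that is nondecreasing in each argument. For such a functional the one-sided inequalities \eqref{m_eqn} can be pushed through the chain rule even though the system is non-cooperative. I will show that $G$ itself is such a functional, and that along \eqref{m_eqn} it satisfies a Riccati-type inequality. Throughout I rely on the rigorous framework of \cite{CE98}: for $u_0\in H^2$ the functions $m_1,m_2$ are locally Lipschitz, satisfy \eqref{m_eqn} a.e., and $m_1\le 0\le m_2$ while the solution exists.

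\textbf{Step 1: rewrite $G$.} Set $s=x+y$ and $d=y-x\ge 0$. Then
$$G=s+\operatorname{sgn}(4-d)\sqrt{d\,f(d)},\qquad f(d)=d-4-4\ln(d/4)\ge 0,$$
with $f$ vanishing only at $d=4$. The point $(x,y)=(-2,2)$, that is $s=0$ and $d=4$, is exactly the equilibrium of the equality system $m_1'=-m_1^2+m_2-m_1$, $m_2'=-m_2^2+m_2-m_1$. I expect the level set $\{G=0\}$ to be the stable separatrix through it. Indeed, in the variables $(s,d)$ the equality system reads
$$s'=-\tfrac{s^2+d^2}{2}+2d,\qquad d'=-sd,$$
and $d\,ds/dd$ gives a linear ODE for $s^2$. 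Integrating it yields $s^2=d\,f(d)$ as the separatrix, and presumably the level sets $G=c$ arise from the same computation. I will verify this directly first, since it explains the formula.

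\textbf{Step 2: monotonicity of $G$ (the main obstacle).} I need $\partial_xG\ge 0$ and $\partial_yG\ge 0$ on the relevant region $\{x\le 0\le y,\ G<0\}$. Then, wherever $G$ is differentiable, a.e.
$$\frac{d}{dt}G(m_1,m_2)\le \partial_xG\,(-m_1^2+m_2-m_1)+\partial_yG\,(-m_2^2+m_2-m_1)=:\mathcal{L}G .$$
In $(s,d)$-coordinates we have $\partial_x=\partial_s-\partial_d$ and $\partial_y=\partial_s+\partial_d$, so the requirement is
$$\Bigl|\tfrac{d}{dd}\bigl[\operatorname{sgn}(4-d)\sqrt{df(d)}\bigr]\Bigr|\le 1 .$$
A computation gives this derivative as $\pm\frac{f+df'}{2\sqrt{df}}$, which is $\le 1$ in absolute value if and only if $(f+df')^2\le 4df$. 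I expect this inequality to hold, though checking it is delicate near $d=4$, where both sides vanish to second order, and as $d\to0$. I would first try the substitution $d=4e^{\tau}$ and an elementary convexity argument. If it fails somewhere, I would restrict to the subregion actually visited, which is forward invariant by Step 3. The point $d=4$, where the sign switches and $G$ is only $C^1$, also needs separate care.

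\textbf{Step 3: Riccati inequality and conclusion.} Compute $\mathcal{L}G$ explicitly. Since $G-s$ depends only on $d$, we get
$$\mathcal{L}G=s'+(\cdot)'(d)\,d'=-\tfrac{s^2+d^2}{2}+2d-sd\,\frac{d}{dd}\bigl[\operatorname{sgn}(4-d)\sqrt{df}\bigr].$$
Substituting $s=G-\operatorname{sgn}(4-d)\sqrt{df}$, and using that the $G$-independent terms cancel by Step 1 (which is exactly the separatrix identity), I aim to obtain
$$\mathcal{L}G\le -\tfrac12 G^2 .$$
I expect this to hold in the form $-\tfrac12G^2+(\text{term linear in }G\text{ with nonpositive coefficient when }G<0)$. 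Consequently $G(t)$ stays negative, so $\Omega$ is forward invariant, and comparison for the scalar inequality gives
$$G(t)\le \frac{G_0}{1+G_0t/2},\qquad G_0=G(m_1(0),m_2(0)) .$$
This bound tends to $-\infty$ as $t\to -2/G_0$.

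Finally I show that $G\to-\infty$ forces $m_1\to-\infty$. If $d$ stays below $4$, then $s\le G$ and so $2m_1\le s\to-\infty$. If $d>4$, then $G=s-\sqrt{df}$ with $\sqrt{df}\le d+C$, hence $G\ge 2m_1-C$. In both cases $m_1\to-\infty$ no later than $-2/G_0$, and the blow-up alternative of \cite{CE98} identifies this as wave breaking with $T\le -2/G_0$.
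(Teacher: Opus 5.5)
Your proposal is correct and is essentially the paper's argument: monotonicity of $G$ via $|S'|\le 1$ with $S(d)=\text{sgn}(4-d)\sqrt{df(d)}$, the inequality $\frac{d}{dt}G\le-\frac12G^2+G\,(S-dS')$ in which the separatrix identity removes the $G$-free terms and $S-dS'=-2d\ln(d/4)/S\ge 0$, and the bound $m_1\le G/2$ (the paper's Lemmas 2 and 3). The only structural difference is that you obtain forward invariance of $\Omega$ from this inequality by a continuity argument, whereas the paper uses a separate trapdoor lemma (Lemma 1).

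To close Step 2 you should not lean on Step 3. Monotonicity genuinely fails for small $d$ (roughly $d<0.31$), but $\{x\le 0\le y,\ G<0\}\subset\{d>4/e\}$: for $d\le 4$ one has $-d\le s<-\sqrt{df(d)}$, which forces $f(d)<d$, i.e.\ $d>4/e$. For $d\ge 4e^{-2}$, your substitution $d=4e^{\tau}$ turns $(f+df')^2\le 4df$ into $e^{\tau/2}\ge 1+\tau/2$, which holds.
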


$$$$

Several remarks are in order:

(i) The wave breaking condition in Seliger \cite{Se68} is
\begin{equation}\label{Secon}
m_1(0)+m_2(0)\leq -2.
\end{equation}
As illustrated in Figure \ref{fig1}, Seliger's criterion captures the region below the solid blue line. Our extended condition $G(m_1 , m_2)<0$ strictly encompasses Seliger's condition and the boundary curve $G(m_1, m_2)=0$ precisely traces the exact stable manifold connected to the hyperbolic saddle point $(-2,2)$ of the associated auxiliary equality system.

(ii) A natural approach to analyzing the inequality system \eqref{m_eqn} would be to employ standard comparison principles, such as Kamke's comparison theorem \cite{EK39, HS95,WW70} to directly bound its trajectories by those of the corresponding equality system. However, one cannot trivially assert that the trajectories of the inequality system are pointwise bounded by those of the equality system. More precisely, Kamke's theorem explicitly requires the underlying vector field to be cooperative (or quasi-monotone non-decreasing); that is, all off-diagonal elements of the system's Jacobian matrix must be non-negative. For the vector field defined in \eqref{m_eqn}, the off-diagonal derivative is strictly negative: $\frac{\partial}{\partial m_1}(-m^2 _2 +m_2 -m_1)=-1$. Thus, the system is strictly non-cooperative, rendering standard comparison principles inapplicable.

(iii) This inherent structural lack of monotonicity underscores the necessity of our geometric approach. To rigorously overcome this obstacle and force finite-time blow-up, we explicitly construct the sharp one-way trapdoor $G(x,y)=0$ for the inequality system (Lemma \ref{lemma1}) alongside the study of the time evolution of $G(x(t), y(t))$ (Lemma \ref{lemma2}).\\

The details of the proof of Theorem \ref{main_thm} are carried out in the rest of the paper.

\section{Proof of Theorem \ref{main_thm}}
We first consider the following auxiliary equality system on the domain $\mathbb{R}^- \times \mathbb{R}^+ :=(-\infty, 0)\times(0,\infty)$.
\begin{equation}\label{auxsystem}
\left\{
  \begin{array}{ll}
  x'(t)=-x^2(t)+y(t)-x(t), \\
   y'(t)=-y^2(t)+y(t)-x(t).
  \end{array}
\right.
\end{equation}
This system possesses two equilibrium points,
$$
(x,y)=(-2,2) \text{ and } (0,0),
$$
where the first is a hyperbolic saddle and the second is a degenerate equilibrium of saddle type. We establish a sharp blow-up condition for this auxiliary system.

\begin{figure}[h!]
\centering
\begin{subfigure}{0.48\textwidth}
    \centering
    \includegraphics[width=\textwidth]{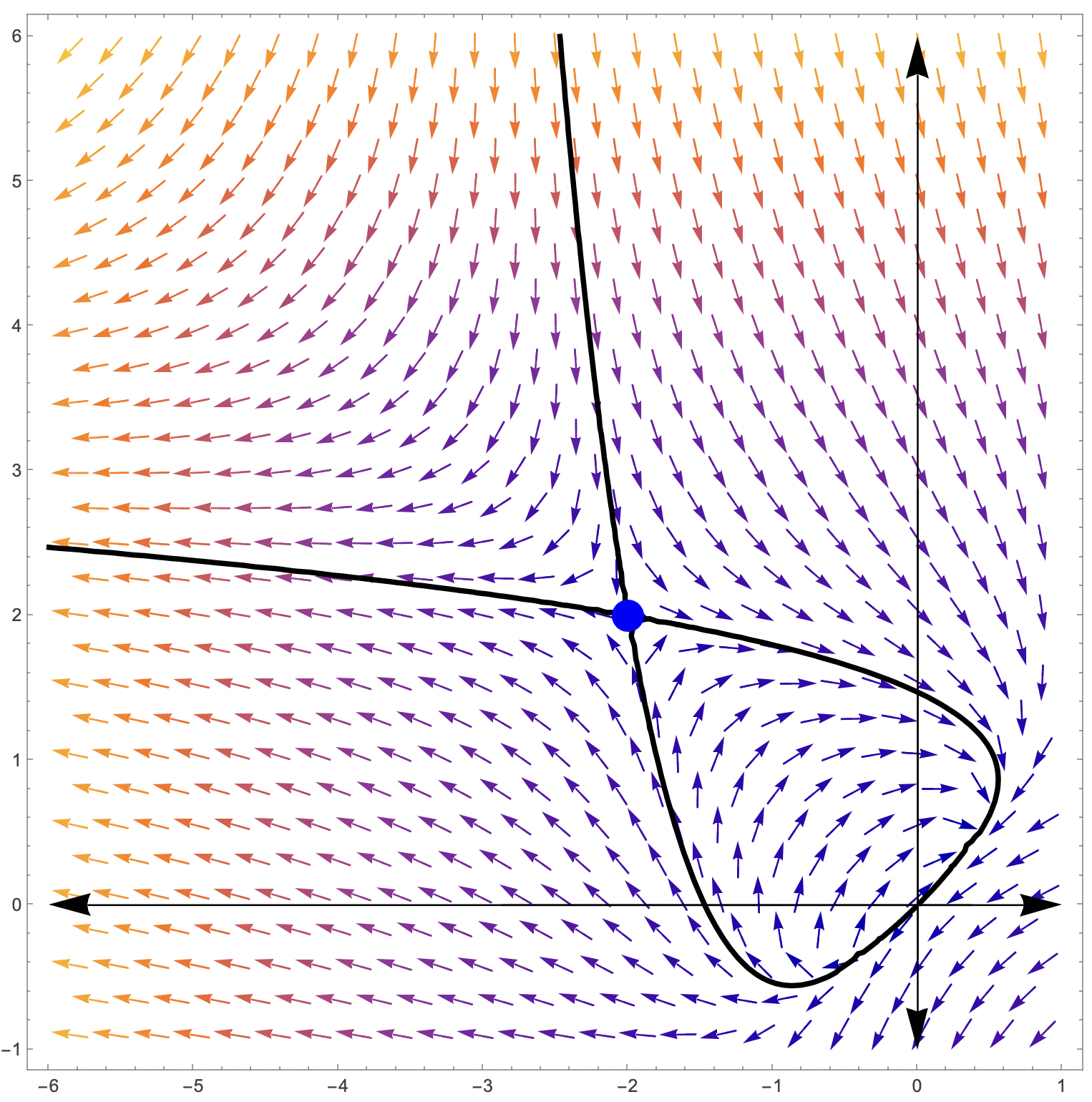}
    \caption{The phase plane of system \eqref{auxsystem} and the graph of the first integral  \eqref{1stint}-\eqref{Kval}}
    \label{fig:figure1}
\end{subfigure}
\hfill
\begin{subfigure}{0.48\textwidth}
    \centering
    \includegraphics[width=\textwidth]{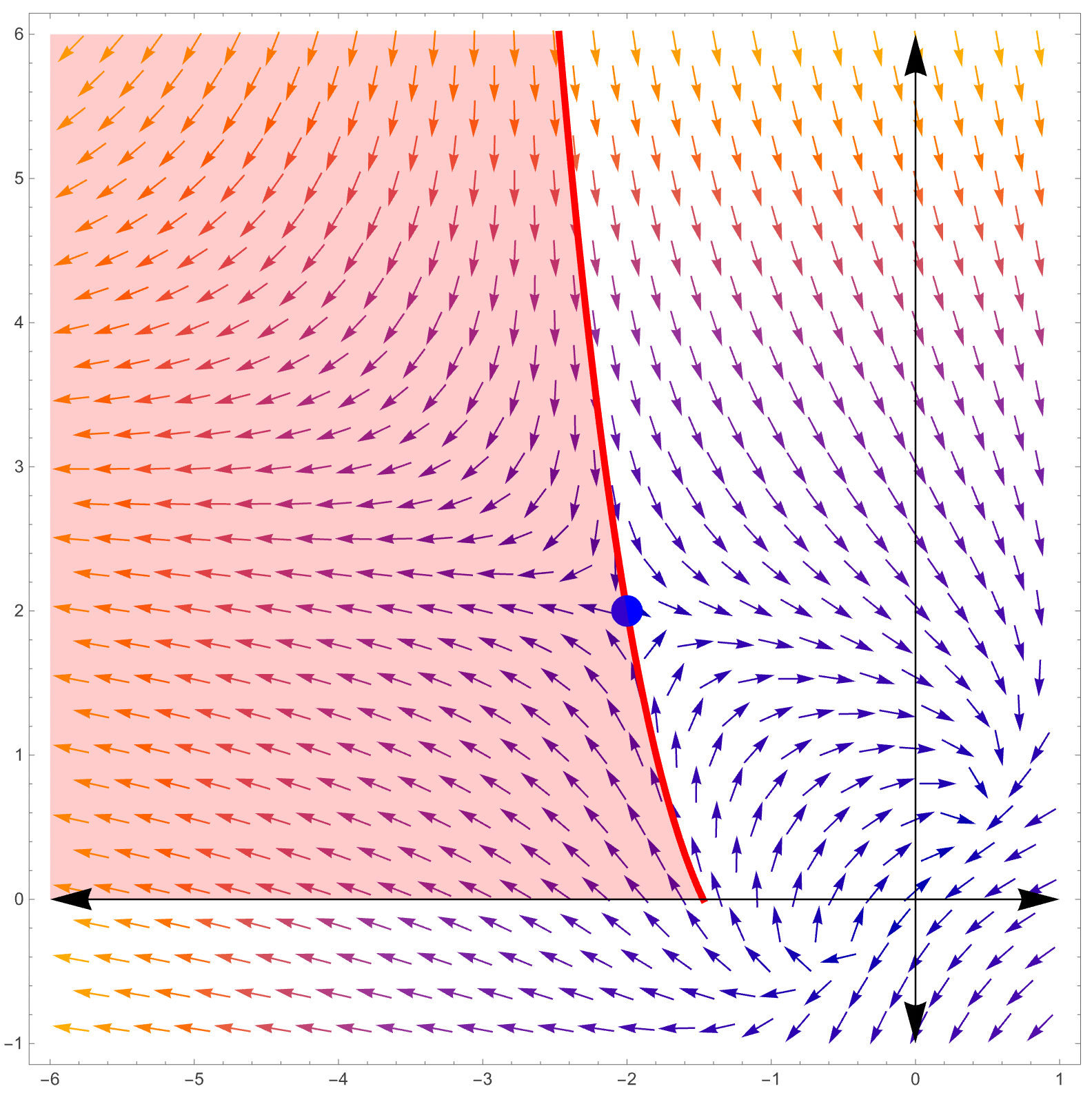}
    \caption{The separatrix \eqref{sep} and the blow-up region for the system \eqref{auxsystem}.}
    \label{fig:figure2}
\end{subfigure}
\caption{Phase plane of the auxiliary system \eqref{auxsystem}.}
\end{figure}

\begin{proposition}
Consider system \eqref{auxsystem} with initial conditions $(x(0), y(0)) \in \mathbb{R}^- \times \mathbb{R}^+$. It holds that $x(t)\to -\infty$ in finite time if and only if
$$
(x(0) , y(0))\in \Omega:=\{(x,y)\in \mathbb{R}^- \times \mathbb{R}^+  \, \big| \, G(x,y)<0 \},
$$
where
\begin{equation}\label{Gsep}
G(x,y):=x+y+\text{sgn}(x-y+4)\sqrt{(x-y)\big\{(x-y)+4\ln(y-x)+4-4\ln 4 \big\}}.
\end{equation}
\end{proposition}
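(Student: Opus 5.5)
The plan is to pass to the sum and difference variables $s:=x+y$ and $d:=y-x>0$, in which \eqref{auxsystem} becomes
\begin{equation*}
s'=-\tfrac12(s^2+d^2)+2d,\qquad d'=-ds .
\end{equation*}
Along trajectories with $s\neq 0$ I would treat $d$ as the independent variable and set $w=s^2$. This gives the linear equation $dw/dd=w/d+d-4$, that is, $(w/d)'=1-4/d$. Hence
\begin{equation*}
H(s,d):=\frac{s^2}{d}-d+4\ln d
\end{equation*}
is a first integral. One checks directly that $\frac{d}{dt}H=0$ along solutions, so the formula also holds across $s=0$. The saddle $(-2,2)$ corresponds to $(s,d)=(0,4)$, where $H=4\ln 4-4=:H^*$. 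The level set $\{H=H^*\}$ is exactly $s^2=d\,(d-4\ln d+4\ln4-4)$. In the original variables this reads $(x+y)^2=(x-y)\{(x-y)+4\ln(y-x)+4-4\ln4\}$. The two branches through $(0,4)$ are the stable and unstable manifolds.

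Next I would pick out the stable branch. On it, $d'=-ds$ must push $d$ toward $4$, which forces $s<0$ for $d<4$ and $s>0$ for $d>4$. This is precisely the curve $s=-\mathrm{sgn}(4-d)\sqrt{\cdots}$, i.e.\ $G=0$. The factor $\mathrm{sgn}(x-y+4)=\mathrm{sgn}(4-d)$ in \eqref{Gsep} encodes this choice of branch. The set $\Omega=\{G<0\}$ is then the open region lying ``below'' the stable manifold in the $s$-direction. Since the stable manifold is an invariant curve and $H$ is conserved, $\Omega$ is forward invariant. Trajectories cannot cross the separatrix, and on the axes $x=0$ or $y=0$ the vector field points appropriately (for instance $x'=y>0$ at $x=0$ is excluded inside $\Omega$ because $s<0$ there, which needs a small check).

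For sufficiency I would use the Riccati bound obtained from $2d-d^2/2\le 2$:
\begin{equation*}
s'\le -\tfrac12 s^2+2 .
\end{equation*}
So once $s(t_0)<-2$, $s$ reaches $-\infty$ in finite time. Because $d'=-ds>0$ when $s<0$, we have $x=(s-d)/2\le s/2\to-\infty$ as well.

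It therefore remains to show that every trajectory in $\Omega$ eventually has $s<-2$. I would do this with a case split via the level set of $H$. In $\Omega$ with $d<4$ we have $s<0$ and $H>H^*$. In $\Omega$ with $d\ge4$ the trajectory either already has $s<0$, or has $s>0$ with $H<H^*$, in which case $d$ decreases until $s$ changes sign. In every case the trajectory enters $\{s<0\}$ with $H\neq H^*$ and stays away from the saddle. Once $s<0$, $d$ increases. If $s$ stayed bounded, say $s\ge -M$, then conservation of $H$ would force $s^2\approx d^2$ as $d\to\infty$, which is a contradiction. If instead $d$ stayed bounded, then $d$ would converge and $s\to0$, so the trajectory would approach an equilibrium on the line $s=0$, impossible away from $(0,4)$ and $(0,0)$. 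I expect this step to be the main obstacle: excluding convergence to the degenerate equilibrium $(0,0)$ and handling carefully the sign of $s$ near $d=4$, which requires comparing $H$ with $H^*$ on each side.

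For necessity, suppose $G\ge0$. If $G=0$ the trajectory lies on the stable manifold and tends to $(-2,2)$. If $G>0$, the trajectory lies on a level set of $H$ on the other side of the separatrix. I would show that this level set stays in a bounded region of $\mathbb R^-\times\mathbb R^+$, or that its $x$-coordinate stays bounded, using the explicit relation $s^2=d^2-4d\ln d+(H)d$ together with $x=(s-d)/2$. The key observation is that along the relevant branch $s>0$ or $s\to 0^-$, so $x$ cannot tend to $-\infty$ in finite time; at worst solutions escape to $x=0$ or $y=0$, or approach $(0,0)$.
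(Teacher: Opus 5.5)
Your proposal is correct. The first half coincides with the paper's argument: the variables $s=x+y$, $d=y-x$, the conserved quantity $H=s^2/d-d+4\ln d$ (the paper's relation \eqref{0thint} with $H=-C$), and the choice of the stable branch from the sign of $d'=-ds$.

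The difference is in the second half, which the paper only asserts as ``standard phase-plane arguments.'' You use conservation of $H$ to show that every orbit starting in $\Omega$ reaches $\{s<-2\}$, and then finish with Seliger's bound $s'\le-\frac12 s^2+2$. The paper's own mechanism is Lemmas \ref{lemma1}--\ref{lemma3}, which apply verbatim to \eqref{auxsystem} with $a=b=0$. There the Riccati inequality $\frac{d}{dt}G\le-\frac12G^2$ holds for $G$ itself, and it is combined with $x<\frac12G$. The paper's route has three advantages:
\begin{itemize}
\item it gives blow-up immediately, with the explicit bound $T\le-2/G(x(0),y(0))$;
\item it needs no case split on $H$ versus $H^*$;
\item it survives the passage to the inequality system, where $H$ is no longer conserved.
\end{itemize}
Your route is tied to the equality system. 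In exchange, it also proves the ``only if'' direction, which the paper never actually argues.

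A few points to tidy when you write it out.
\begin{itemize}
\item The unstable branch $\{d>4,\ s=-\sqrt{W(d)}\}$, with $W$ as in \eqref{SWdef}, lies in $\Omega$ and has $H=H^*$. This contradicts your ``in every case $H\neq H^*$,'' but it is harmless, since $d$ increases away from $4$ along that branch.
\item That $s$ stays negative once negative needs an argument. On $\{H=h\}$, $s$ vanishes only where $d-4\ln d=-h$. This never happens for $h>H^*$. For $h<H^*$ it happens only at $d_1(h)<4<d_2(h)$, and orbits from $\Omega$ lie on the piece $d\ge d_2(h)$.
\item Convergence to $(0,0)$ is excluded trivially, because $d$ is increasing once $s<0$.
\item Your parenthetical reason for excluding $x=0$ is not the right one. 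On $x=0$ one has $s=d>0$, so $G>0$ trivially for $d\le4$. For $d>4$, it is $W(d)<d^2$ that gives $G=d-\sqrt{W(d)}>0$; in fact $\Omega\subset\{x<-4/e\}$.
\item In the necessity part, orbits can leave the quadrant only through $x=0$, never through $y=0$, where $y'=-x>0$.
\end{itemize}
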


\begin{proof} We first identify the separatrix of the system. Introduce the substitutions
$$
u(t):=x(t)-y(t) \text{ and } v(t):=x(t)+y(t).
$$
Computing the derivatives of these yield
$$
\begin{aligned}
u'&=x'-y'\\
&=y^2 -x^2\\
&=-uv,
\end{aligned}
$$
and
$$
\begin{aligned}
v'&=x'+y'\\
&=-(x^2 + y^2)+2(y-x)\\
&=-\frac{u^2+v^2}{2}-2u.
\end{aligned}
$$
Using these derivative, we obtain
$$
\frac{dv}{du}=\frac{v'}{u'}=\frac{u^2+v^2+4u}{2uv}.
$$
Equivalently, this can be written as
$$
2uv \, dv - v^2 \, du = u^2 \, du + 4u \, du
$$
or
$$
\frac{2uv \, dv - v^2 \, du}{u^2}=\bigg(1+\frac{4}{u} \bigg) \, du.
$$
Integrating both sides yields
$$
\frac{v^2}{u}=u+4(\ln |u|)+C,
$$
where $C$ is a constant of integration.

Substituting the original variables back into the equation gives
\begin{equation}\label{0thint}
\frac{(x+y)^2}{x-y}=x-y + 4\ln |x-y|+C.
\end{equation}
After further simplification, we obtain the following first integral of system \eqref{auxsystem}.
\begin{equation}\label{1stint}
xy=(x-y)(\ln(y-x) + K),
\end{equation}
where $K=\frac{C}{4}$ is a constant. Because we are restricting our domain to $(x,y)\in \mathbb{R}^- \times \mathbb{R}^+$, we have $x<y$, allowing $|x-y|$ to simplify to $y-x$.

Substituting the coordinates of the saddle point $(x,y)=(-2,2)$ into \eqref{1stint} and solving for $K$ gives
\begin{equation}\label{Kval}
K=1-\ln(4).
\end{equation}

The phase portrait of system \eqref{auxsystem}, along with the contour of the first integral defined by \eqref{1stint} and \eqref{Kval}, are shown in Figure \ref{fig:figure1}.

The stable manifold curve associated with the saddle point $(-2,2)$ is obtained by rearranging \eqref{0thint}, substituting $C=4K=4-4\ln 4$, and selecting the upper and lower branches of the first integral:
\begin{equation}\label{sep}
x+y=-\text{sgn}(x-y+4)\sqrt{(x-y)\big\{(x-y)+4\ln(y-x)+4-4\ln 4 \big\}}.
\end{equation}
Define
$$
G(x,y):=x+y+\text{sgn}(x-y+4)\sqrt{(x-y)\big\{(x-y)+4\ln(y-x)+4-4\ln 4 \big\}}.
$$

This sharp separatrix, defined by $G(x,y)=0$, is shown in Figure \ref{fig:figure2}. The $x$-intercept of the separatrix is $(-4/e, 0)$. 
The remainder of the proof relies on standard phase-plane arguments: the positive invariance of the separatrix $G(x,y)=0$ strictly confines trajectories starting within $\Omega$, where the dominance of the $-x^2$ term in the vector field guarantees finite-time blow-up.
\end{proof}

Now, we consider the original system of differential inequalities
\begin{equation}\label{ineqsystem}
\left\{
  \begin{array}{ll}
  x'(t)\leq F(x(t),y(t)):= -x^2(t)+y(t)-x(t), \\
   y'(t)\leq H(x(t), y(t)):=-y^2(t)+y(t)-x(t).
  \end{array}
\right.
\end{equation}

We point out that all physically relevant trajectories naturally lie in the second quadrant. This is because
$$
\inf_{x\in \mathbb{R}}[u_x (t,x)]=x(t)\leq 0\leq y(t)=\sup_{x\in \mathbb{R}}[u_x (t,x)].
$$
Therefore, we restrict our analysis to this region.

Before proceeding with the formal phase plane analysis, we briefly remark on the ``almost everywhere" ($a.e.$) condition present in the differential inequalities \eqref{m_eqn}. For a sufficiently regular solution $u(t,x)$, it is a standard result (see, e.g., \cite{CE98}) that the spatial extremum functions $m_1(t)$ and $m_2(t)$ are locally Lipschitz continuous in time. Consequently, they are absolutely continuous, and by Rademacher's theorem, their classical time derivatives exist almost everywhere. The absolute continuity of these functions guarantees that the Fundamental Theorem of Calculus holds, allowing us to rigorously integrate these differential inequalities over any time interval. Thus, the $a.e.$ restriction presents no analytical obstacle.

We first prove that the separatrix $G(x,y)=0$ of the equality system \eqref{auxsystem} serves as a one-way trapdoor.

\begin{lemma}\label{lemma1}
Consider the system \eqref{ineqsystem}. Let $(x(t), y(t))$ be a trajectory of the inequality system. If the initial condition satisfies $(x(0), y(0))\in \Omega$, then 
$$
(x(t), y(t))\in \Omega
$$
for all $t\geq 0$, as long as the solution exists. In other words, the trajectory $(x(t), y(t))$ cannot cross the curve $G(x,y)=0$, $x<-\frac{4}{e}$.
\end{lemma}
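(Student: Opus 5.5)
The plan is to replace the signed-square-root function $G$ by the smooth first integral of \eqref{auxsystem}. We then show that, along the relevant arc of the separatrix, the gradient of this integral has components of a \emph{fixed common sign}. Once that holds, the inequality system can only push trajectories toward $\Omega$, even though the system is non-cooperative. Concretely, set
$$
\Phi(x,y):=xy-(x-y)\big(\ln(y-x)+1-\ln 4\big),
$$
so that, by \eqref{1stint}--\eqref{Kval}, $\Phi$ is constant along solutions of \eqref{auxsystem}; that is, $\Phi_x F+\Phi_y H\equiv 0$ on $\mathbb{R}^-\times\mathbb{R}^+$. The curve $G=0$ is a branch of $\{\Phi=0\}$. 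A direct computation gives
$$
\Phi_x = y-\ln(y-x)-\ln\tfrac{e^{2}}{4}\cdot\tfrac12\cdot 2 +0 = y-\ln(y-x)+\ln 4-2,\qquad \Phi_y = x+\ln(y-x)+2-\ln 4 .
$$
The first step is to identify on which side of the arc $\{G=0,\ x<-4/e\}$ the set $\Omega$ lies, in terms of the sign of $\Phi$. Near the arc, I expect $\Omega$ to coincide with $\{\Phi>0\}$ or with $\{\Phi<0\}$ after fixing the branch through the $\mathrm{sgn}(x-y+4)$ factor; I will check this at the intercept $(-4/e,0)$ and at a far point such as $y=0$, $x\to-\infty$.

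The second step, which I expect to be the main obstacle, is a sign analysis of $\Phi_x$ and $\Phi_y$ restricted to the arc. I aim to show that both are $\le 0$ there (if $\Omega=\{\Phi>0\}$ locally), or both are $\ge 0$ (in the opposite case). For this I will parametrize the arc by $s=y-x>0$: from $\Phi=0$ one gets $xy=-s(\ln s+1-\ln 4)$ together with $y-x=s$, so $x,y$ are explicit functions of $s$. The two gradient components then become one-variable functions of $s$, and monotonicity in $s$ should settle their signs. Both components vanish at the saddle $(-2,2)$ ($s=4$), which is consistent with $\nabla\Phi=0$ at an equilibrium lying on a level set. Away from $s=4$, I expect strict signs on each of the two branches. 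At the saddle itself, I will argue separately using the Hessian of $\Phi$, or by excluding it through the observation that a trajectory reaching $(-2,2)$ from $\Omega$ would have to cross one of the branches first.

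The third step is the trapping argument itself. Suppose a trajectory leaves $\Omega$, and let $t^*$ be the first time at which it meets the arc. The functions $x,y$ are absolutely continuous, as remarked before the lemma, so $\Phi(x(t),y(t))$ is absolutely continuous and
$$
\frac{d}{dt}\Phi(x,y)=\Phi_x x'+\Phi_y y'\ \ge\ \Phi_x F+\Phi_y H=0\quad\text{a.e. near the arc}
$$
when $\Phi_x,\Phi_y\le 0$, using $x'\le F$ and $y'\le H$; the inequality is reversed in the other sign case. This is exactly where the common sign of the gradient components replaces the missing cooperativity. Hence $\Phi$ is monotone in the direction pointing into $\Omega$ on a neighbourhood of the arc. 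To turn this into a contradiction with reaching $\Phi=0$ at $t^*$, I will apply the estimate on a thin tubular neighbourhood, where $\nabla\Phi$ keeps its sign by continuity. If needed, I will replace $\Phi$ by $\Phi-\varepsilon$ level curves and let $\varepsilon\to0$, so that a non-strict inequality still forbids crossing.

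Finally, the portion of $\partial\Omega$ not on the arc must be excluded. The pieces $x=0$ and $y=0$ cannot be crossed because $x\le 0\le y$ holds automatically for the extrema. The restriction $x<-4/e$ in the statement covers exactly the remaining boundary. Together these give $(x(t),y(t))\in\Omega$ for all $t$ in the existence interval.
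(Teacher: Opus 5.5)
Your mechanism is the paper's. The separatrix has a normal whose components share one sign, which is equivalent to $g'<0$; the paper encodes this in $V=y-g(x)$ with $\nabla V=(-g',1)$. So the defects $a,b\ge 0$ in $x'=F-a$, $y'=H-b$ can only push toward $\Omega$. Your step 2 amounts to $(W')^2\le 4W$, which is proved in Lemma~\ref{lemma2}. The sign case does switch between branches: $\Omega$ lies on the $\Phi>0$ side of the arc with $4/e<y-x<4$, and on the $\Phi<0$ side of the arc with $y-x>4$. Your test points on $y=0$ only detect the first case.

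The first genuine gap is that step 3 rests on a false identity: $\Phi$ is not a first integral. A direct computation gives
\[
\Phi_xF+\Phi_yH=-(x+y)\,\Phi ,
\]
which vanishes only on $\{\Phi=0\}$. The conserved quantity in \eqref{1stint} is $I=\frac{xy}{x-y}-\ln(y-x)$, and $\Phi=(x-y)(I-1+\ln 4)$. Hence $\Phi$ is not monotone toward $\Omega$ near the arc. For example, at $(-\tfrac52,\tfrac{11}{2})\in\Omega$, beside the upper branch, one has $\Phi<0$ and $\Phi_x,\Phi_y>0$. With $a=b=0$ this gives $\frac{d}{dt}\Phi=-(x+y)\Phi>0$, so $\Phi$ moves toward $0$. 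The level sets $\Phi=\pm\varepsilon$ are not invariant, so your $\varepsilon$-device does not fix this. The repair is easy but must be made. Either use $I$, for which $\frac{d}{dt}I=-aI_x-bI_y$ exactly, or apply Gr\"onwall to $\frac{d}{dt}\Phi\ge-(x+y)\Phi$ (reversed on the upper branch).

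The second gap is the saddle $(-2,2)$, which is exactly where a level-set function fails. There $\nabla\Phi=0$ (and $\nabla I=0$), and $\{\Phi=0\}$ is the cross formed by the stable and unstable manifolds. Near the saddle, $\Omega$ is not one side of this set. It contains the branch of the unstable manifold with $y-x>4$. Moreover, at $(-2-\delta,2+\delta)\in\Omega$ one has $\Phi_x=-\Phi_y=\delta-\ln(1+\delta/2)>0$ for every $\delta>0$, so the common-sign property fails arbitrarily close to the saddle. Consequently nothing in your argument prevents a trajectory from reaching $(-2,2)$ from inside $\Omega$. Your fallback, that such a trajectory ``would have to cross one of the branches first,'' is not valid, because $(-2,2)\in\partial\Omega$ can be approached directly from the interior.

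The paper sidesteps this by using the graph function $V=y-g(x)$. The stable manifold is a $C^1$ graph through the saddle with $g'(-2)=-(3+2\sqrt2)$, so $\nabla V$ never vanishes. At the saddle itself $F=H=0$, which gives $V'=-b+ag'\le0$ directly. To make ``cannot cross'' airtight, one should also note that $|H-g'F|\le L|V|$ locally and use Gr\"onwall to obtain $V<0$ for all time. A non-strict sign of $V'$ at $V=0$ does not suffice on its own.
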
 

\begin{proof}
We rewrite the inequality system as a system of exact differential equations by introducing nonnegative, time-dependent functions $a(t)\geq 0$ and $b(t)\geq 0$:
\begin{equation}\label{tempsystem}
\left\{
  \begin{array}{ll}
  x'(t)=F(x(t),y(t))-a(t), \\
  y'(t)=H(x(t), y(t))-b(t).
  \end{array}
\right.
\end{equation}

Let $y = g(x)$ denote the separatrix curve $G(x,y)=0$ of the equality system \eqref{auxsystem} obtained in the previous Lemma. That is, $y = g(x)$ is obtained by solving $G(x, y)=0$ for $y$.

To prove that a trajectory cannot escape through the curve $y = g(x)$, we define the vertical distance function
$$
V(t)=y(t)-g(x (t)).
$$

In the region $\Omega$, we have $V(t)<0$. If a trajectory touches the boundary $y= g(x)$, then $V(t)=0$.

Consider
\begin{equation}\label{hprime}
\begin{aligned}
V'(t)&=y'(t)-g'(x(t))x' (t)\\
&=\big\{ H(x(t), y(t))-b(t) \big\} - g'(x(t)) \big\{F(x(t),y(t))-a(t) \big\}\\
&=\big\{H(x(t), y(t)) -g'(x(t))F(x(t), y(t)) \big\}-b(t)+a(t)g'(x(t)).
\end{aligned}
\end{equation}

If $V(t)=0$, or equivalently $y(t) = g(x(t))$, then 
$$
g'(x(t))=\frac{dy}{dx}=\frac{H(x(t), g(x(t)))}{F(x(t), g(x(t)))}.
$$
Substituting this expression into \eqref{hprime} yields
$$
V'(t)=-b(t)+a(t)g'(x(t)).
$$
Since $g'(x(t))<0$, we conclude that
$$
V'(t)\leq 0
$$
along the curve $y = g(x)$. Therefore, whenever $V(t)=0$, we have $V'(t)\leq 0$. Hence, the trajectory of the inequality system cannot cross the curve $y = g(x)$.
\end{proof}

Next, we study the time evolution of $G(x(t), y(t))$, where $G(x,y)$ is defined in \eqref{Gsep}.

\begin{lemma}\label{lemma2}
Consider the system \eqref{ineqsystem}. Let $(x(t), y(t))$ be a trajectory of the inequality system. If the initial condition satisfies $(x(0), y(0))\in \Omega$, then
$$
\frac{d}{dt}G(x(t), y(t)) \leq -\frac{1}{2}G^2(x(t), y(t)),
$$
for all $t\geq 0$, as long as the solution exists.
\end{lemma}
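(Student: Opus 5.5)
The plan is to pass to the coordinates $u=x-y<0$ and $v=x+y$ used in the proof of the Proposition, where $G$ separates as $G=v+s(u)$ with
$$
s(u):=\text{sgn}(u+4)\sqrt{\Phi(u)},\qquad \Phi(u):=u\big(u+4\ln(-u)+C\big),\quad C=4-4\ln 4 .
$$
Note that $\Phi\ge 0$ on $u<0$: the factor $f(u)=u+4\ln(-u)+C$ is concave with maximum $f(-4)=0$, so $f\le 0$, and the only zero of $\Phi$ on $u<0$ is at $u=-4$, where $s$ changes sign. As in Lemma \ref{lemma1}, I write the inequality system \eqref{ineqsystem} as \eqref{tempsystem} with $a,b\ge 0$. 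Then
$$
u'=-uv-a+b,\qquad v'=-\tfrac{u^2+v^2}{2}-2u-a-b .
$$
Hence, wherever $s$ is differentiable (i.e.\ $u\neq -4$, with $u=-4$ handled by continuity and the a.e.\ formulation),
$$
\frac{d}{dt}G=\Big[-\tfrac{u^2+v^2}{2}-2u-s'(u)\,uv\Big]-a\big(1+s'(u)\big)-b\big(1-s'(u)\big).
$$

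The first step is to handle the bracket, which is the contribution of the equality system. I add $\tfrac12G^2=\tfrac12(v^2+2vs+s^2)$ and use $s^2=\Phi$ and $2ss'=\Phi'$. A direct computation then gives the exact identity
$$
\Big[\cdots\Big]+\tfrac12 G^2 \;=\; u\,w(u)\,\frac{G}{s(u)},\qquad w(u):=2\ln\!\big(-\tfrac{u}{4}\big).
$$
The quotient $w/s$ is finite at $u=-4$, since both $w$ and $s$ vanish linearly there. This identity is consistent with the separatrix being invariant, because the right-hand side vanishes where $G=0$. It therefore suffices to show two things on $\Omega$:
\begin{itemize}
\item the sign condition $u\,w\,G/s\le 0$;
\item the forcing bound $|s'(u)|\le 1$, which makes $-a(1+s')-b(1-s')\le 0$.
\end{itemize}
The forcing bound is a one-variable calculus fact about $\Phi$. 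I would check it by writing $s'=\Phi'/(2s)$ and comparing $(\Phi')^2$ with $4\Phi$, using $\Phi'(u)=2u+4\ln(-u)+4+C$.

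The main obstacle is the sign of the equality-system remainder. For the remainder, Lemma \ref{lemma1} guarantees that trajectories stay in $\Omega$, so $G<0$ throughout. A first sign count is troubling: $w$ and $s$ both have the sign of $-(u+4)$, so $w/s>0$, and with $u<0$ the term $u\,w\,G/s$ would be \emph{nonnegative}.

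Before going further I would therefore recheck the sign convention for the branch $\text{sgn}(x-y+4)$ in \eqref{Gsep} against Figure \ref{fig:figure2}, verifying at a concrete point (e.g.\ near $(-4/e,0)$) which side of the curve is $\Omega$. If the convention makes $s$ have the same sign as $u+4$, the remainder is $\le0$ and the lemma follows. Otherwise the plan is to absorb the remainder rather than drop it. I would try to bound $|u\,w/s|$ by a multiple of $|G|$ on $\Omega$, which would yield $G'\le -cG^2$ with a possibly different constant than $\tfrac12$. Failing that, I would restrict to the region where the sign is favorable and combine it with the trapping from Lemma \ref{lemma1}.
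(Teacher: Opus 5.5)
Your route is the paper's own. You use the splitting $G=v+s(u)$; the paper writes $z=y-x=-u$ and $S(z)=s(-z)$. You use the requirement $|s'|\le 1$, which is the paper's $G_x=1-S'\ge 0$, $G_y=1+S'\ge 0$ and is equivalent to your discarding of the $a,b$ terms. You use the same exact identity, which in the paper reads $\frac{d}{dt}G\le -\frac12 G^2+G\,(S-zS')$ with $S-zS'=-2z\ln(z/4)/S$; this is your $u\,w/s$.

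\textbf{The sign count at the decisive step is wrong.} By your own definition $s(u)=\text{sgn}(u+4)\sqrt{\Phi(u)}$, the function $s$ has the sign of $u+4$, not of $-(u+4)$. Only $w(u)=2\ln(-u/4)$ has the sign of $-(u+4)$. Hence $w/s<0$ for $u\neq -4$. At $u=-4$ the limit is $w/s\to -1/\sqrt2$; there $s(u)=(u+4)/\sqrt2+O((u+4)^2)$ is actually smooth, so that point needs no special treatment. Therefore $u\,w/s>0$, and the remainder $u\,w\,G/s$ is $\le 0$ wherever $G<0$. Together with Lemma \ref{lemma1}, this closes the proof exactly as in the paper. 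Nothing needs to be checked against the figure, since the branch is fixed by the formula $\text{sgn}(x-y+4)=\text{sgn}(u+4)$.

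As written, though, your proposal asserts the opposite sign and stops, and your fallback plans would not rescue it. The quantity $u\,w/s$ depends on $u$ alone and is a fixed nonzero number at each $u$. Meanwhile $G\to 0^-$ as one approaches the separatrix at that $u$. So $|u\,w/s|$ can never be bounded by a multiple of $|G|$, and a remainder of unfavorable sign could not be absorbed into any bound $G'\le -cG^2$.

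\textbf{A smaller gap: the forcing bound is not a one-variable fact on all of $u<0$.} With $z=-u$, the comparison $(\Phi')^2\le 4\Phi$ reduces to $z\ge(\ln(z/4)+2)^2$. This fails for small $z$, for example at $z=4e^{-3}$, where the right side equals $1$. You need $\Omega\subset\{z>4/e\}$, which holds for the following reason. In the open second quadrant $|v|<z$. For $z<4$ the condition $G<0$ reads $v<-\sqrt{\Phi}$, which forces $\sqrt{\Phi}<z$, i.e.\ $\ln(z/4)>-1$. For $z\ge 4$ the inclusion is automatic. On $z>4/e$ the function $z-(\ln(z/4)+2)^2$ is convex with minimum $0$ at $z=4$. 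This is the paper's argument; the paper builds $x<-4/e$ directly into $\Omega$.
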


\begin{proof} Note that the region $\Omega$ is defined by $G(x,y)<0$ and $x<-\frac{4}{e}$.
We define 
$$
z(t):=y(t)-x(t), \text{ and } v(t):=x(t)+y(t).
$$
Then, $G(x(t),y(t))$ can be written as
$$
G(x(t),y(t))=v(t)+\text{sgn}(4-z(t))\sqrt{z^2(t) -4z(t)\ln(z(t)/4)-4z(t)},
$$
or equivalently,
\begin{equation}\label{GvS}
G(x(t), y(t))=v(t)+S(z(t)),
\end{equation}
where
\begin{equation}\label{SWdef}
S(z):=\text{sgn}(4-z)\sqrt{W(z)}, \text{ and } W(z):=z^2 -4z\ln(z/4)-4z.
\end{equation}

We first claim that
\begin{equation}\label{Gineq1}
\begin{aligned}
\frac{d}{dt}G(x(t), y(t))&=G_x (x(t), y(t))x'(t)+G_y(x(t), y(t))y'(t)\\
&\leq (1-S'(z(t)))F(x(t), y(t))+(1+S'(z(t)))H(x(t), y(t)).
\end{aligned}
\end{equation}
To prove this, we show that
$$
G_x(x,y)=1-S'(z) \text{ and } G_y (x,y)=1+S'(z)
$$
are nonnegative for $(x,y) \in \Omega$. This requires $(S'(z))^2\leq 1$, or equivalently 
$$
(W'(z))^2 \leq 4W(z),
$$
since $S^2(z)=W(z)$ implies $S'(z)=\frac{W'(z)}{2S(z)}$.
Substituting the expression for $W(z)$ and $W'(z)=2z-4\ln(z/4)-8$ into the above inequality yields
$$
z-\big(\ln(\frac{z}{4})+2 \big)^2\geq 0.
$$
One can easily verify that for $z>\frac{4}{e}$, the function on the left-hand side is strictly convex and attains its global minimum value $0$ at $z=4$. Hence, the inequality in \eqref{Gineq1} holds throughout $\Omega$.

Since
$$F(x,y)+H(x,y)=-\frac{1}{2}v^2 -\frac{1}{2}z^2 +2z \text{ and } H(x,y)-F(x,y)=-zv,$$ 
inequality \eqref{Gineq1} can be rewritten as
$$
\frac{d}{dt}G(x(t), y(t))\leq -\frac{1}{2}v^2(t) -\frac{1}{2}z^2(t) +2z(t)-S'(z(t))z(t)v(t).
$$
Substituting $v(t)=G(x(t), y(t))-S(z(t))$ from \eqref{GvS}, and suppressing the dependence on $x(t)$, $y(t)$, and $t$ for simplicity, we obtain
\begin{equation}\label{Gineq2}
\begin{aligned}
\frac{d}{dt}G &\leq \bigg(-\frac{1}{2}S^2 -\frac{1}{2}z^2 + 2z + zSS' \bigg)-\frac{1}{2}G^2 + G\cdot(S-zS')\\
&=-\frac{1}{2}G^2 + G(S-zS'),
\end{aligned}
\end{equation}
where the equality follows from
$$
\begin{aligned}
-\frac{1}{2}S^2 -\frac{1}{2}z^2 + 2z + zSS'&=-\frac{1}{2}W-\frac{1}{2}z^2+2z+zS(\frac{W'}{2S})\\
&=-\frac{1}{2}(z^2 -4z\ln(z/4)-4z)-\frac{1}{2}z^2+2z+z\cdot \frac{2z-4\ln(z/4)-8}{2}\\
&=0.
\end{aligned}
$$

Moreover,
$$
\begin{aligned}
G(S-zS')&=G\cdot\bigg(S-z\cdot \frac{W'}{2S}\bigg)\\
&=G\cdot\bigg(\frac{2W-zW'}{2S} \bigg)\\
&=G\cdot \bigg( \frac{-2z\ln(z/4)}{S} \bigg).
\end{aligned}
$$
Therefore, \eqref{Gineq2} reduces to
$$
\begin{aligned}
\frac{d}{dt}G(x(t), y(t))&\leq -\frac{1}{2}G^2(x(t),y(t))+G(x(t),y(t)) \bigg( \frac{-2z(t)\ln(z(t)/4)}{S(z(t))} \bigg).
\end{aligned}
$$
Finally, since 
$$G(x,y) \bigg( \frac{-2z\ln(z/4)}{S(z)} \bigg)= G(x,y) \bigg( \frac{-2(y-x)\ln((y-x)/4)}{S(y-x)} \bigg) \leq 0,$$ 
on $(x,y)\in\Omega$, we obtain the desired result.
\end{proof}

Next, we prove that $G(x(t), y(t))\to -\infty$ implies $x(t)\to -\infty$. 

\begin{lemma}\label{lemma3}
Consider the system \eqref{ineqsystem}. Suppose that $(x(0), y(0))\in \Omega$.
If there exists $T\in(0,\infty)$ such that
$$
\lim_{t\to T-}G(x(t), y(t)) =-\infty,
$$
then
$$
\lim_{t\to T-}x(t)=-\infty.
$$
\end{lemma}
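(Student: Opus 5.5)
We need to show that $G\to-\infty$ forces $x\to-\infty$. Write, as in the proof of Lemma \ref{lemma2}, $G = v + S(z)$ with $v=x+y$ and $z=y-x>0$.

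The idea is to bound $G$ from below by a function that stays finite as long as $x$ stays bounded. We use two facts: $y(t)\ge 0$ along physically relevant trajectories (the restriction to the second quadrant), and $y$ cannot grow without bound unless $x$ does, thanks to the inequality $y'\le -y^2+y-x$.

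\textbf{Step 1: a lower bound for $G$.} We have $v = x+y \ge x$, since $y\ge 0$. Next, $W(z)=z^2-4z\ln(z/4)-4z\ge 0$ on the relevant range; this is implicit in the definition of $S$, and in particular $W(z)\le z^2+4z\,|\ln(z/4)|+4z$. Hence
$$|S(z)|=\sqrt{W(z)}\le z+C\sqrt{z(1+|\ln z|)}.$$
Combining,
$$G\ \ge\ x-|S(z)|\ \ge\ x-(y-x)-C\sqrt{z(1+|\ln z|)}.$$
On $\Omega$ we also have $z\ge 4/e$ (from $x<-4/e$ and $y\ge 0$), so $|\ln z|$ is controlled by $z$. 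Consequently $G\to-\infty$ forces $z\to\infty$, that is, $y-x\to\infty$.

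\textbf{Step 2: rule out the bad case by contradiction.} Suppose $x(t)\not\to-\infty$ as $t\to T-$. Then along some sequence $t_n\to T$ the values $x(t_n)\ge -M$ stay bounded, while $y(t_n)\to\infty$ by Step 1.

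Now use the inequality for $y$. Whenever $y$ is large and $x\ge -M'$, we get $y'\le -y^2+y+M'<0$, which is a Riccati-type decay. So $y$ can only become large when $x$ is already very negative.

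The main obstacle is that $x$ might oscillate. To handle this, we exploit monotonicity of $x$. Because $y\ge 0$ and the trajectory is trapped in $\Omega$ (Lemma \ref{lemma1}), we expect that once $G<0$ is large in absolute value, the term $-x^2$ dominates and forces $x'<0$. If that fails in general, use the following fallback. By Lemma \ref{lemma2}, $G$ is decreasing. Since $y\ge 0$ and $-|S|\ge -z-o(z)$, we have $G\ge 2x-y-o(y-x)$. At the first time $y$ exceeds $2N$ while $x\ge -N$, we would need $y'>0$. But there $y'\le -4N^2+2N+N<0$ for large $N$, which is a contradiction. So $y\le \max(2N,y(0))$ on any interval where $x\ge -N$.

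\textbf{Step 3: conclude.} On such an interval $z$ stays bounded, so $G$ stays bounded by Step 1, contradicting $G\to-\infty$. Hence for every $N$ there is a time beyond which $x<-N$. This gives $\lim_{t\to T-}x(t)=-\infty$.

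The delicate point in this last step is handling the liminf versus the limit. We use the fact that $G$ is monotone (decreasing by Lemma \ref{lemma2}). Then on a whole tail interval $G<-R$. The argument above shows that $G<-R$ is incompatible with $x\ge -N(R)$, where $N(R)\to\infty$ as $R\to\infty$. This yields the full limit, not just a limit along a subsequence.
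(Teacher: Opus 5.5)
Your proposal has a genuine gap: Steps 2–3 do not close. The only control you actually prove on $y$ is local. On an interval $[a,b]$ on which $x\ge -N$ throughout, $y'\le -y^2+y+N<0$ whenever $y\ge 2N$, so $y\le\max(2N,y(a))$ there. You write $y(0)$ instead of $y(a)$, which is only legitimate when the interval starts at $0$. But the hypothesis $x\not\to-\infty$ gives you only a sequence $t_n\to T$ with $x(t_n)\ge -M$. Nothing in your argument prevents $x$ from first making a deep negative excursion, during which $y'\le -y^2+y-x$ allows $y$ to grow, and then returning to $[-M,0)$ with $y$ still large. That is exactly the oscillation scenario you flag as ``the main obstacle'' and then leave unresolved. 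The claim that $-x^2$ eventually forces $x'<0$ is only an expectation. Your final assertion, that $G<-R$ is incompatible with $x\ge -N(R)$, is a pointwise statement. Your Step 1 bound $G\ge 2x-y-C\sqrt{z(1+|\ln z|)}$ cannot deliver it because of the $-y$ term, and knowing that $G<-R$ on a whole tail interval does not fix this.

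The missing idea is that no dynamics is needed: your estimates $v\ge x$ and $|S|\le z+C\sqrt{z(1+|\ln z|)}$ throw away an exact cancellation. Since $v=x+y=2x+z$, you have $G=2x+z+S(z)$, and $z+S(z)\ge 0$. For $z\le 4$ this holds because $S(z)\ge 0$. For $z>4$ it holds because $W(z)=z^2-4z(\ln(z/4)+1)<z^2$, so $\sqrt{W(z)}<z$. Hence $x\le \tfrac12 G(x,y)$ pointwise, and $G\to-\infty$ gives $x\to-\infty$ as a full limit at once; this is the paper's proof. The same sharp estimate also gives $z-\sqrt{W(z)}\ge 2(\ln(z/4)+1)$ for $z>4$. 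Combined with $G<0$ on $\Omega$, this shows that $x\ge -M$ forces $z< 4e^{M-1}$, which is the missing control on $y$ your contradiction argument needs. Once you have that estimate, though, the detour through Steps 2–3 is superfluous.
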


\begin{proof}
Using \eqref{GvS} and $z=y-x$, we can rewrite $G(x,y)$ as
$$
G(x,y)=2x+z+S(z).
$$
Rearranging gives
$$
x=\frac{1}{2}G(x,y)-\frac{1}{2}\big(z+S(z) \big).
$$
We now verify that $z+S(z) \geq 0$ on $\Omega$.\\

\textbf{Case 1:} $z\in [4/e , 4]$.\\
From \eqref{SWdef}, $S(z)=\text{sgn}(4-z)\sqrt{W(z)}$. Since $z\in [4/e , 4]$, we have
$$
z+S(z)\geq z \geq \frac{4}{e}>0.
$$

\textbf{Case 2:} $z \in (4,\infty)$.\\
In this case, $S(z)=-\sqrt{W(z)}$. By \eqref{SWdef}, 
$$
W(z)=z^2-4z\big(\ln(z/4)+1 \big).
$$
Since $z>4$, we have $W(z)<z^2$. Therefore,
$$
z+S(z)=z-\sqrt{W(z)}>0.
$$

Hence, we established that 
$$
x(t)< \frac{1}{2}G(x(t), y(t)),
$$
which yields the desired result.
\end{proof}

We are now ready to prove Theorem \ref{main_thm}. If $(x(0),y(0))\in \Omega$, then by Lemma \ref{lemma1}, the trajectory $(x(t),y(t))$ never escapes $\Omega$. Next, integrating the inequality in Lemma \ref{lemma2} gives
$$
\frac{1}{G(x(t), y(t))}\geq \frac{1}{G(x(0), y(0))}+\frac{1}{2}t.
$$
Since $G(x(0), y(0))<0$, the above inequality implies that 
$$G(x(t), y(t)) \to -\infty \text{ as } t\to -\frac{2}{G(x(0),y(0))}.$$
Finally, Lemma \ref{lemma3} implies that $x(t)\to -\infty$. This completes the proof of Theorem \ref{main_thm}.

\bibliographystyle{abbrv}

\end{document}